\newtheorem{prethm}{{\bf Theorem}}
\newenvironment{thm}{\begin{prethm}{\hspace{-0.5
               em}{\bf}}}{\end{prethm}}
\newtheorem{prepro}{{\bf Theorem}}
\newtheorem{preprop}{{\bf Proposition}}
\newtheorem{preex}{{\bf Example}}
\newenvironment{ex}{\begin{preex}{\hspace{-0.5
               em}{\bf}}}{\end{preex}}
\newtheorem{precor}{{\bf Corollary}}
\newenvironment{cor}{\begin{precor}{\hspace{-0.5
               em}{\bf}}}{\end{precor}}
\newtheorem{preconj}{{\bf Conjecture}}
\newtheorem{preremark}{{\bf Remark}}
\newtheorem{prelem}{{\bf Lemma}}
\newenvironment{lem}{\begin{prelem}{\hspace{-0.5
               em}{\bf}}}{\end{prelem}}
\newtheorem{preproof}{{\bf Proof.}}
\newenvironment{proof}[1]{\begin{preproof}{\rm
               #1}\hfill{$\Box$}}{\end{preproof}}
\title{\bf Cospectral lifts of graphs}
\author{\large
   F. Ramezani$^{1}$
    \\{\it Department of Mathematics, Faculty of Science,}
\\ {\it  K.N.Toosi University of Technology, Tehran, Iran,}
\\ {\it P.O. Box 16315-1618}1
 \\[.3cm]
 }
\date{}
\begin{document}
\maketitle  \footnotetext[1]{\tt Corresponding Author, Email
address: ramezani@kntu.ac.ir}

\begin{abstract}\rm\noindent
We prove that for a pair of cospectral graphs $G$ and $H$, there
exist their non trivial lifts $G'$ and $H'$ which are cospectral.
More over for a pair of cospectral graphs on $6$ vertices, we
find some cospectral lifts of them.

\vspace{1cm}
\noindent {\bf AMS Subject Classification:} 05C50.\\
{\bf Keywords:} lifts of graphs, eigenvalues, cospectral graphs.
\end{abstract}

\section{Introduction}
Let $G=(V,E)$ be a simple graph on the vertex set
$V(G)=\{v_1,v_2,\ldots,v_n\}$ and edge set $E$. The {\it adjacency
matrix} of $G$ is an $n$ by $n$ matrix $A(G)$ whose $(i,j)$-th
entry is $1$ if vertices $v_i$ and $v_j$ are adjacent and $0$,
otherwise. The \textit{spectrum} of $G$ is the multi-set of
eigenvalues of $A(G)$. Two graphs $G$ and $G'$ are called {\it
cospectral} if they share the same spectrum. We say $G$ is
\textit{determined by spectrum} (\textit{DS} for short) if it has
no non-isomorphic cospectral mate.

The problem of constructing cospectral graphs, has been
investigated by some authors. For a survey of results on this
area we refer the reader to \cite{DH,DH1,RA}. In \cite{GHRATA} the
authors have used the concept $m$-cospectrality to construct new
cospectral graphs. Haemers et all in \cite{HABR} have considered
Godsil-McKay switching method to construct non-isomorphic
cospectral graphs see the paper for more details. In this article
we use the concept lift of graphs to construct new non-isomorphic
cospectral graphs from given small cospectral pairs of graphs.

\section{Preliminaries}

In this section we mention some basic definitions and results
which will be used during the paper. We denote by $\dot{E}$ the
set of all ordered pairs $\{(i,j)|\textrm{ } i< j , \textrm{ }
\{v_i,v_j\}\in E \}$. For an Abelian group of order $k$, say $Gr$,
a $k-$Abelian signature $s$ of the graph $G$ is a map
$s:\dot{E}\longrightarrow Gr$. A $k$-Abelian lift of the graph
$G$, associated with the signature $s$, which is denoted by
$G(s)$, is a graph on the vertex set $V(G)\times [k]$
($[k]=\{0,1,\ldots,k-1\}$, $Gr=(\{g_0,g_1,\ldots,g_{k-1}\},*$)),
where for any $(i,j)\in \dot{E}$ and $a,b\in [k]$ there is an edge
between $(v_i,a)$ and $(v_j,b)$ if and only if $s(i,j)*g_a=g_b$.
Note that in the graph $G(s)$, for any $(i,j)\in \dot{E}$, there
is a matching between the vertex sets $V_i=\{v_i\}\times [k]$ and
$V_j=\{v_j\}\times [k]$. If a graph have $m$ edges there may be
$k^m$ different $k$-Abelian lifts of $G$, since the sets $V_i,V_j$
are matched in $k$ different ways. If the signature $s$ maps all
pairs to the same element $g\in Gr$, then we denote the
corresponding lift $G(s)$ with $G_g$. We illustrate the
definition of the $k$-lifts of a graph in the following figure.
In the following graph the graph $G$ is the cycle $C_4,$ and the
corresponding signature is $s:\dot{E}\longrightarrow
\mathbb{Z}_2$, with $s(1,3)=0,s(1,4)=0,s(2,3)=1,s(2,4)=0.$

\vspace{1cm}

${\hspace{1cm}\put(100,0){\line(0,1){50}}\put(100,0){\line(1,1){50}}\put(150,0){\vdots
 }\put(150,12){\vdots}\put(150,24){\vdots}\put(150,36){\vdots}\put(150,48){.}
 \put(150,0){\line(-1,1){50}}\put(100,0){\circle*{5}}\put(92,-5){4}\put(93,53){1}\put(153,-5){3}\put(153,53){2}\put(150,50){\circle*{5}}
 \put(100,50){\circle*{5}}\put(150,0){\circle*{5}}\put(170,25){$\longrightarrow$}
 \put(210,0){\circle*{4}}\put(220,0){\circle*{4}}\put(210,50){\circle*{4}}\put(220,50){\circle*{4}}
 \put(270,0){\circle*{4}}\put(260,0){\circle*{4}}\put(270,50){\circle*{4}}\put(260,50){\circle*{4}}
 \put(210,0){\line(0,1){50}}\put(220,0){\line(0,1){50}}\put(210,0){\line(1,1){50}}\put(220,0){\line(1,1){50}}
 \put(270,0){\line(-1,1){50}}\put(260,0){\line(-1,1){50}}\put(270,0){\line(-1,5){10}}\put(260,0){\line(1,5){10}}}\put(150,50){\circle*{5}}
 \put(100,50){\circle*{5}}\put(150,0){\circle*{5}}$
$$\hspace{-1.5cm}G\hspace{3.5cm} G(s)$$

 $$\textrm{\textbf{Figure1.} 2-\textrm{lift of } G \textrm{ corresponding to the signature }s}$$

Let $Gr=(\{g_1=1,g_2,\ldots,g_{n}\},*)$ be a group of order $n$.
For any group element say $g\in Gr$ there is an $n\times n$
permutation matrix $P_g$ in correspondence, which is defined
bellow,$$P_g(i,j)=
  \begin{cases}
    1 & \text{ \textrm{if} } g_i*g=g_j, \\
    0 & \text{otherwise}.
  \end{cases}
$$

\begin{lem} {\rm The function $\phi:Gr\rightarrow SL(n,\mathbb{R})$,
where $SL(n,\mathbb{R})$ is the set of $n\times n$ real
non-singular matrices and $\phi(g)=P_g$, is a group homomorphism.}
\end{lem}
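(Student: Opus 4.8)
The plan is to verify the two defining properties of a group homomorphism: first that $\phi$ actually maps into $SL(n,\mathbb{R})$, and second that it respects the group operation, i.e. $\phi(g*h)=\phi(g)\phi(h)$ for all $g,h\in Gr$. The key observation throughout is that $P_g$ is nothing but the matrix of the permutation of $Gr$ given by right multiplication $x\mapsto x*g$, so the whole statement is really the assertion that the right regular representation is a faithful permutation representation.

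First I would check that each $P_g$ is a permutation matrix and hence non-singular. For a fixed $g$, the map $g_i\mapsto g_i*g$ is a bijection of $Gr$ (its inverse is right multiplication by $g^{-1}$), so in each row $i$ there is exactly one index $j$ with $g_i*g=g_j$, and symmetrically in each column $j$ there is exactly one row $i$. Thus every row and every column of $P_g$ contains exactly one entry equal to $1$, making $P_g$ a permutation matrix with $\det P_g=\pm1\neq0$; in particular $\phi(g)\in SL(n,\mathbb{R})$ under the convention adopted here that $SL(n,\mathbb{R})$ denotes the non-singular real matrices.

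Next I would establish the multiplicative property by a direct entrywise computation of the product. Using the definition,
$$
(P_gP_h)(i,j)=\sum_{k=1}^{n}P_g(i,k)\,P_h(k,j),
$$
and since there is a unique $k$ with $P_g(i,k)=1$, namely the $k$ with $g_k=g_i*g$, the sum collapses to a single term. That term equals $P_h(k,j)$, which is $1$ exactly when $g_k*h=g_j$, i.e. when $(g_i*g)*h=g_j$. By associativity $(g_i*g)*h=g_i*(g*h)$, so $(P_gP_h)(i,j)=1$ precisely when $g_i*(g*h)=g_j$, which is exactly the condition $P_{g*h}(i,j)=1$. Hence $P_gP_h=P_{g*h}$, i.e. $\phi(g*h)=\phi(g)\phi(h)$. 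As a sanity check, the identity $g_1=1$ gives $P_{g_1}=I$, since $g_i*1=g_j$ forces $i=j$.

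There is no genuine obstacle here: the result is a standard fact about the right regular representation, and the only points requiring care are bookkeeping ones --- getting the direction of multiplication right (the signature acts on the right, so $P_g$ encodes $x\mapsto x*g$ rather than $x\mapsto g*x$) and invoking associativity at the correct moment so that the composition $P_gP_h$ matches $P_{g*h}$ and not $P_{h*g}$. Once the indexing convention in the definition of $P_g$ is fixed, the homomorphism identity follows immediately from the entrywise calculation above.
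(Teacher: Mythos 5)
Your proof is correct. Note that the paper itself offers no proof of this lemma at all --- it is stated as a standard fact and the text moves on immediately --- so there is no argument of the paper's to compare yours against; what you have written supplies the omitted verification. Your computation is the standard one: each $P_g$ is the permutation matrix of right translation $x\mapsto x*g$, hence non-singular, and the entrywise evaluation of $(P_gP_h)(i,j)$ collapses to the condition $(g_i*g)*h=g_j$, which by associativity is exactly $P_{g*h}(i,j)=1$. You also handled the two points where a careless write-up could go wrong: with the paper's indexing convention, right multiplication genuinely yields a homomorphism rather than an anti-homomorphism (this matters since the paper states the lemma for a general group of order $n$, not only an Abelian one, even though only Abelian groups are used later), and you correctly read the paper's nonstandard use of $SL(n,\mathbb{R})$ as meaning the non-singular real matrices (what would usually be written $GL(n,\mathbb{R})$), so that membership only requires $\det P_g=\pm1\neq 0$ rather than $\det P_g=1$.
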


The eigenvalues of the graph $G(s)$ has been studied in the
literature. For instance in the following theorem from
\cite{MOTA} the authors have obtained the eigenvalues of Abelian
$t$-lifts. See \cite{MOTA} for more details and the notations.

\begin{thm}\label{mota} { \rm Let $G$ be a multigraph and $\phi$ be a
signature assignment to an Abelian group. Let $\beta$ be a common
basis of eigenvectors of the permutation matrices in the image of
$\phi$. For every $\mathbf{x}\in \beta$, let $A_\mathbf{x}$ be
the matrix obtained from the adjacency matrix of $G$ by replacing
any $(u,v)$-entry of $A(G)$ by $\sum_{(e,u,v)\in
\overrightarrow{E}(G)}\lambda_\mathbf{x}(\phi(e,u,v))$. Then the
spectrum of the $t$-lift $G(\phi)$ of $G$ is the multiset union
of the spectra of the matrices $A_\mathbf{x} (\mathbf{x} \in
\beta)$.}
\end{thm}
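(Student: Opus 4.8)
The plan is to express the adjacency matrix of the lift as an $n\times n$ array of $t\times t$ permutation-matrix blocks, simultaneously diagonalize those blocks using the common eigenbasis $\beta$, and then rearrange coordinates to expose a block-diagonal decomposition into the matrices $A_\mathbf{x}$.

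First I would fix $t=|Gr|$, order the vertices $v_1,\dots,v_n$ of $G$, and identify $V(G(\phi))$ with the index set of pairs $(u,i)$, $u\in\{1,\dots,n\}$, $i\in\{1,\dots,t\}$. The edge rule of the lift says exactly that $A(G(\phi))$, read as an array of $t\times t$ blocks, has $(u,v)$-block $B_{uv}=\sum_{(e,u,v)\in\overrightarrow{E}(G)}\phi(e,u,v)$, a sum of permutation matrices: each oriented edge $e$ contributes the matching of fibers $V_u\to V_v$ encoded by $\phi(e,u,v)=P_{s(e)}$. This reformulation is the crux that lets the representation-theoretic structure enter.

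Next I would invoke the preceding lemma (that $\phi$ is a homomorphism) together with the hypothesis that $Gr$ is Abelian: the image $\{P_g : g\in Gr\}$ is then a commuting family of orthogonal permutation matrices, hence simultaneously diagonalizable. Let $Q$ be the unitary matrix whose columns are the common eigenvectors $\beta=\{\mathbf{x}_1,\dots,\mathbf{x}_t\}$, so $Q^{-1}P_gQ$ is diagonal with $i$-th entry $\lambda_{\mathbf{x}_i}(P_g)$. Conjugating $A(G(\phi))$ by the block-diagonal matrix $I_n\otimes Q$ replaces each block $B_{uv}$ by $Q^{-1}B_{uv}Q=\sum_{(e,u,v)}Q^{-1}\phi(e,u,v)Q$, which is diagonal, its $i$-th diagonal entry being $\sum_{(e,u,v)}\lambda_{\mathbf{x}_i}(\phi(e,u,v))$ — precisely the $(u,v)$-entry of $A_{\mathbf{x}_i}$.

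At this stage every block is diagonal, and the final step is a permutation similarity: the reindexing $(u,i)\mapsto(i,u)$ collects, for each fixed $i$, the $i$-th diagonal entries of all blocks into one $n\times n$ matrix, which is exactly $A_{\mathbf{x}_i}$. Hence $A(G(\phi))$ is similar to $\bigoplus_{i=1}^{t}A_{\mathbf{x}_i}$, and its spectrum is the multiset union $\bigcup_{\mathbf{x}\in\beta}\operatorname{spec}(A_\mathbf{x})$. The individual steps are routine once the block picture is set up; I expect the only real care to be needed in the bookkeeping of this last rearrangement (from ``an array of diagonal blocks'' to ``a direct sum of the $A_\mathbf{x}$''), and in noting that although the characters $\lambda_\mathbf{x}$, and hence the matrices $A_\mathbf{x}$, are complex, each $A_\mathbf{x}$ is Hermitian — reversing an edge's orientation carries the inverse group element, whose character value is the complex conjugate — so the union of their spectra is a genuine real multiset, consistent with $A(G(\phi))$ being real and symmetric.
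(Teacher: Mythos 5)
Your proof is correct. Note, however, that there is no in-paper proof to compare it against: the paper states Theorem \ref{mota} without proof, importing it from \cite{MOTA} and referring the reader there for details and notation. Your argument is the standard one for spectra of Abelian lifts, and essentially the one used in that reference: identify $A(G(\phi))$ with an $n\times n$ array of $t\times t$ blocks $B_{uv}=\sum_{(e,u,v)\in\overrightarrow{E}(G)}\phi(e,u,v)$; since the group is Abelian and $g\mapsto P_g$ is a homomorphism (the paper's Lemma 1), these blocks lie in a commuting family of normal matrices and are simultaneously diagonalized by the common eigenbasis $\beta$, so conjugation by $I_n\otimes Q$ makes every block diagonal; the shuffle permutation $(u,i)\mapsto(i,u)$ then exhibits similarity to $\bigoplus_{\mathbf{x}\in\beta}A_{\mathbf{x}}$, giving the multiset-union statement. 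Your two side remarks are also sound and worth retaining: $B_{vu}=B_{uv}^{T}$ because $P_{g^{-1}}=P_{g}^{T}$, which is exactly why each $A_{\mathbf{x}}$ is Hermitian and the union of the spectra of these complex-entried matrices is a genuine real multiset; and taking $Q$ unitary is legitimate because the regular representation of an Abelian group of order $t$ splits into $t$ distinct one-dimensional characters, so the common eigenspaces are one-dimensional and a common eigenbasis is orthogonal up to scaling (though for the similarity argument any invertible matrix of common eigenvectors would do).
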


\section{Main result}
Our main problem here is "for given pair of cospectral graphs $G$
and $H$, is there $k$-Abelian signatures $s,s'$ which $G(s)$ and
$H(s')$ are cospectral?". We look for general answers of this
question.

It is known that for $l,l'\in Gr$ the permutation matrices
$P_l,P_{l'}$ commute, so they have common basis of eigenvectors.
The following theorem is a straight consequence of Theorem
\ref{mota}.

\begin{thm} \label{ma} {\rm Let G be a graph and $s$ be a $k$-cyclic signature of $G$. Let
$\beta$ be a common basis of eigenvectors of the permutation
matrices in the image of $s$. For every $\mathbf{x} \in \beta$,
let $A_\mathbf{x}$ be the matrix defined bellow
$$A_\mathbf{x}(i,j)=
  \begin{cases}
   \lambda_\mathbf{x}(P_{s(i,j)}) &  i<j, \\
   0& i=j,\\
   \lambda_\mathbf{x}^{-1}(P_{s(i,j)}) & i>j.
  \end{cases}
$$ Then
the spectrum of $G(s)$ is the multi-set union of the spectra of
the matrices $A_\mathbf{x}(\mathbf{x}\in\beta)$.}
\end{thm}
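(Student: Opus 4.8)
The plan is to derive Theorem \ref{ma} directly from Theorem \ref{mota} by specializing the general Abelian-lift framework to the situation at hand, where the group is cyclic and the signature assigns a single group element to each directed edge. First I would observe that the graph $G$ here is a simple graph, so each edge $\{v_i,v_j\}\in E$ with $i<j$ contributes exactly one directed edge $(i,j)\in\dot{E}$ to which the signature $s$ assigns a group element $s(i,j)$; the reverse orientation $(j,i)$ carries the inverse element $s(i,j)^{-1}$, reflecting the fact that an edge between $(v_i,a)$ and $(v_j,b)$ exists precisely when $s(i,j)*g_a=g_b$, equivalently $s(i,j)^{-1}*g_b=g_a$. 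Thus when Theorem \ref{mota} instructs us to replace the $(u,v)$-entry of $A(G)$ by the sum $\sum_{(e,u,v)\in\overrightarrow{E}(G)}\lambda_\mathbf{x}(\phi(e,u,v))$, in our setting the sum has a single term, and the entry becomes $\lambda_\mathbf{x}(P_{s(i,j)})$ for $i<j$ and $\lambda_\mathbf{x}(P_{s(j,i)})=\lambda_\mathbf{x}(P_{s(i,j)^{-1}})$ for $i>j$, with zeros on the diagonal. This is exactly the matrix $A_\mathbf{x}$ in the statement.

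The one point requiring justification is the identity $\lambda_\mathbf{x}(P_{s(i,j)^{-1}})=\lambda_\mathbf{x}^{-1}(P_{s(i,j)})$ used for the lower-triangular entries. Here I would invoke Lemma 1: since $\phi(g)=P_g$ is a group homomorphism, we have $P_{g^{-1}}=P_g^{-1}$, and because $\beta$ is a common eigenbasis of the commuting permutation matrices $\{P_g\}$, each $\mathbf{x}\in\beta$ is a simultaneous eigenvector with $P_g\,\mathbf{x}=\lambda_\mathbf{x}(P_g)\,\mathbf{x}$. Applying $P_g^{-1}$ gives $\lambda_\mathbf{x}(P_g^{-1})=\lambda_\mathbf{x}(P_g)^{-1}$, which is precisely the claimed relation once written in the paper's notation.

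With the entries matched term by term, the matrix produced by Theorem \ref{mota} coincides with the matrix $A_\mathbf{x}$ defined in Theorem \ref{ma}, and the conclusion about the spectrum of $G(s)$ being the multiset union of the spectra of the $A_\mathbf{x}$ transfers verbatim. I expect the main obstacle to be purely notational rather than mathematical: one must carefully reconcile the multigraph/directed-edge bookkeeping of \cite{MOTA} with the simple-graph convention $\dot{E}=\{(i,j)\mid i<j\}$ adopted here, making sure that each undirected edge is counted once with its signature and once with the inverse signature, and that the eigenvalues $\lambda_\mathbf{x}$ of the permutation matrices are the correct characters of the cyclic group. Once the dictionary between the two notational systems is fixed, the proof reduces to substitution and the homomorphism property of $\phi$.
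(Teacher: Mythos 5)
Your proposal is correct and takes essentially the same route as the paper: the paper offers no written-out proof at all, merely asserting that the theorem is ``a straight consequence of Theorem \ref{mota}'' after noting that the permutation matrices $P_l, P_{l'}$ commute and hence admit a common eigenbasis. Your argument simply supplies the details of that specialization --- one directed edge per unordered edge, the inverse element on the reverse orientation, and the identity $\lambda_\mathbf{x}(P_{g^{-1}})=\lambda_\mathbf{x}(P_g)^{-1}$ via the homomorphism property of $\phi$ --- which is exactly what the paper leaves implicit.
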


\begin{lem} {\rm Let $Gr$ be a group of order $n$. For any $g\in Gr$, any eigenvalues of the permutation
matrix $P_g$ is an $n$'th root of unity.}
\end{lem}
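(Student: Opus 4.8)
The plan is to combine Lagrange's theorem with the homomorphism established in Lemma 1. First I would let $d$ denote the order of $g$ in $Gr$; by Lagrange's theorem $d$ divides $|Gr|=n$. Since $\phi(g)=P_g$ is a group homomorphism by Lemma 1, I would compute $P_g^{\,d}=\phi(g)^{d}=\phi(g^{d})=\phi(1)=P_1$. Observing that $P_1$ is the identity matrix $I_n$ (because $g_i*1=g_i$ forces $j=i$ in the defining condition $g_i*g=g_j$), this gives $P_g^{\,d}=I_n$.

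Next I would read off the eigenvalue constraint. From $P_g^{\,d}=I_n$ the matrix $P_g$ is annihilated by the polynomial $x^{d}-1$, so its minimal polynomial divides $x^{d}-1$ and every eigenvalue $\lambda$ of $P_g$ satisfies $\lambda^{d}=1$; that is, each $\lambda$ is a $d$-th root of unity. Finally, since $d\mid n$, any $d$-th root of unity is automatically an $n$-th root of unity: writing $n=dq$ we get $\lambda^{n}=(\lambda^{d})^{q}=1$. This yields the claim.

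There is essentially no hard obstacle here; the only point requiring care is the reduction $P_g^{\,d}=I_n$, which hinges on $\phi$ being a genuine homomorphism and on $P_1=I_n$, both of which are immediate from Lemma 1 and the definition of $P_g$. As a remark, one can sharpen the conclusion: the right-multiplication action of $g$ partitions $Gr$ into cosets of $\langle g\rangle$ all of equal size $d$, so the permutation underlying $P_g$ is a disjoint union of $n/d$ cycles of length $d$. Hence the eigenvalues of $P_g$ are exactly the $d$-th roots of unity, each occurring with multiplicity $n/d$, which is strictly stronger than (and implies) the stated lemma.
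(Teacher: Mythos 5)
Your proposal is correct and follows essentially the same route as the paper: Lagrange's theorem gives a power of $g$ equal to the identity, the homomorphism of Lemma 1 transfers this to $P_g^{\,d}=I_n$ (the paper uses $P_g^{\,n}=I_n$ directly), and the minimal-polynomial argument forces the eigenvalues to be roots of unity. Your version is slightly more careful (justifying $P_1=I_n$ and the passage from $d$-th to $n$-th roots explicitly, plus the sharper multiplicity remark), but the underlying argument is the same.
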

\begin{proof} {The assertion follows by the fact that the order of any element
in the group divides the order of group. Hence $g^n=1$, and
therefore $P_g^n=I_n.$ Hence the minimal polynomial of $P_g$, say
$m(P_g,x)$ divides the polynomial $x^n-1,$ thus the assertion
follows. }
\end{proof}

\begin{lem} {\rm If $G$ and $H$ are cospectral graphs on $n$ verices
and $Gr$ be a finite group of order $t$. If for $g\in Gr$ the
matrix $P_g$ is symmetric, then the graphs $G_g$ and $H_g$ are
cospectral. }
\end{lem}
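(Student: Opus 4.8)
The plan is to apply Theorem \ref{ma} to the constant signature $s \equiv g$, for which $G(s) = G_g$. First I would record that for this signature every edge carries the same group element, so $P_{s(i,j)} = P_g$ for all $(i,j) \in \dot{E}$. Consequently, for a fixed common eigenvector $\mathbf{x} \in \beta$ the scalar $\lambda := \lambda_\mathbf{x}(P_g)$ is the same on every edge. Writing $A(G) = U + L$, where $U$ is the strictly upper-triangular part of $A(G)$ and $L = U^{T}$ is the strictly lower-triangular part, the matrix prescribed by Theorem \ref{ma} becomes $A_\mathbf{x} = \lambda U + \lambda^{-1} L$.

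The crux is to exploit the symmetry hypothesis on $P_g$. Since $P_g$ is a permutation matrix it is orthogonal, so $P_g^{T} = P_g^{-1}$; combined with the assumption $P_g = P_g^{T}$ this gives $P_g^2 = I$. Hence the minimal polynomial of $P_g$ divides $x^2 - 1$, and every eigenvalue $\lambda$ of $P_g$ lies in $\{+1, -1\}$. In particular $\lambda$ is real with $\lambda^{-1} = \lambda$, so $A_\mathbf{x} = \lambda(U + L) = \lambda A(G)$. Thus each matrix $A_\mathbf{x}$ equals $A(G)$ when $\lambda = 1$ and equals $-A(G)$ when $\lambda = -1$, and its spectrum is either $\mathrm{Spec}(G)$ or $-\mathrm{Spec}(G)$ accordingly.

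To finish, let $p$ and $q$ denote the multiplicities of $+1$ and $-1$ as eigenvalues of $P_g$, so that $p + q = t$. By Theorem \ref{ma} the spectrum of $G_g$ is the multiset consisting of $p$ copies of $\mathrm{Spec}(G)$ together with $q$ copies of $-\mathrm{Spec}(G)$. Running the identical argument for $H$ shows that $\mathrm{Spec}(H_g)$ consists of $p$ copies of $\mathrm{Spec}(H)$ together with $q$ copies of $-\mathrm{Spec}(H)$. The decisive observation is that $p$ and $q$ are determined solely by $P_g$, hence by the group element $g$, and are independent of the underlying graph; therefore the same pair $(p,q)$ governs both lifts. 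Since $G$ and $H$ are cospectral we have $\mathrm{Spec}(G) = \mathrm{Spec}(H)$, whence $-\mathrm{Spec}(G) = -\mathrm{Spec}(H)$, and the two multiset descriptions coincide. Thus $G_g$ and $H_g$ are cospectral.

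The only genuinely delicate step is the reduction $A_\mathbf{x} = \lambda A(G)$: it hinges on recognising that symmetry of a permutation matrix forces $P_g^2 = I$ and hence $\lambda = \lambda^{-1}$, which is precisely what collapses the a priori complex-valued matrices $A_\mathbf{x}$ into real scalar multiples of $A(G)$. Once this collapse is in place, the multiplicities $p$ and $q$ are manifestly graph-independent, and the cospectrality of $G_g$ and $H_g$ follows at once from that of $G$ and $H$.
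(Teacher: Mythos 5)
Your proof is correct and follows essentially the same route as the paper's: apply the lift-spectrum theorem to the constant signature $s\equiv g$, use the symmetry of $P_g$ (hence $P_g^{-1}=P_g$) to collapse each $A_\mathbf{x}$ into a scalar multiple of $A(G)$, and observe that these scalars depend only on $P_g$, not on the graph, so cospectrality of $G$ and $H$ transfers to the lifts. Your write-up is in fact slightly sharper, since you make explicit that symmetry forces $P_g^2=I$ and hence the scalars are $\pm 1$, whereas the paper leaves them as generic eigenvalues $\omega_i$ of $P_g$.
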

\begin{proof} {Since the signature corresponds the fixed element $g$ to all the edges
of the graph $G$, and $P_g^{-1}=P_g,$ then by Theorem \ref{mota},
the eigenvalues of the graph $G_g$ are the multi-set union of the
matrices $\omega_i A(G)$, where $\omega_i$'s are the eigenvalues
of $P_g$ for $i=1,2,\ldots,n.$ On the other hand the eigenvalues
of $\omega_i A(G)$ are $\omega_i \lambda_j$ where $\lambda_j$ is
the $j$'th eigenvalue of $G$. Hence the spectrum of $G_g$ and
$H_g$ are the multi-set $\{\omega_i
\lambda_j\}_{i=1,\ldots,t}^{j=1,\ldots,n}.$

 }
\end{proof}


\subsection{Examples} We consider two cospectral graphs $G$ and
$H$, shown in the Figure 1. We try to find possible Abelian lifts
of them say $G(s)$ and $H(s')$ which are also cospectral.

  \includegraphics[width=18cm]{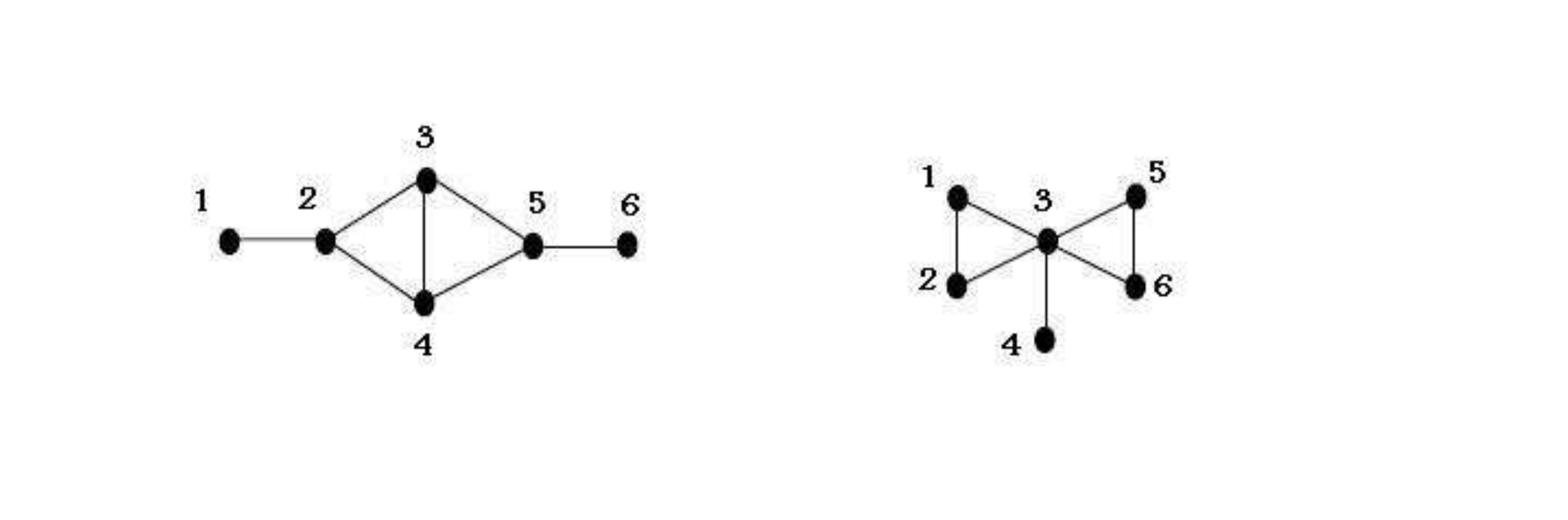}\\
  \vspace{-1cm}
  $$ \textrm{\textbf{Figure1.} Two  cospectral  graphs } G \textrm{ and } H$$\label{as}

We first consider all possible $t$-Abelian signatures of the
graphs $G$ and $H$, suppose that the matrices
$A(G)_\mathbf{x},A(H)_\mathbf{x}$, corresponding to the graphs
$G$, $H$ and their prescribed signatures, which is introduced in
Theorem \ref{mota} are of the following general forms,

$$A(G)_{\mathbf{x}}=\begin{pmatrix}
  0 & u & 0 & 0 & 0 & 0 \\
  u^{-1} & 0 & v & w & 0 & 0 \\
  0 & v^{-1} & 0 & x & y & 0 \\
  0 & w^{-1} & x^{-1} & 0 & z & 0 \\
  0 & 0 & y^{-1} & z^{-1} & 0 & r \\
  0 & 0 & 0 & 0 & r^{-1} & 0
\end{pmatrix}\textrm{, }A(H)_\mathbf{x}=\begin{pmatrix}
  0 & u_1 & v_{1} & 0 & 0 & 0 \\
  u^{-1}_{1} & 0 & w_{1} & 0 &0 & 0 \\
  v^{-1}_{1} & w^{-1}_{1} & 0 & x_{1} & y_{1} & z_{1} \\
  0 &0 & x^{-1}_{1} & 0 & 0 & 0 \\
  0 & 0 & y^{-1}_{1} & 0 & 0 & r_1 \\
  0 & 0 & z^{-1}_{1} & 0 & r^{-1}_{1} & 0
\end{pmatrix}.$$

note that $r,u,v,\ldots,z,r_1,u_1,v_1,\ldots,z_1$ are the complex
variables and stand for the eigenvalues of the permutation
matrices corresponding to each edge. Using Theorem \ref{ma} we
find sufficient conditions on the signatures such that the
corresponding lifts become cospectral.

\begin{thm} \label{as}{ Let $s,s'$ be $k$-Abelian lifts on the graphs $G,H$
respectively. If the following situations hold, the graphs $G(s)$
and $H(s')$ are cospectral.

{\large \begin{itemize}
  \item $\frac{w}{v}=\frac{y}{z}$
  \item $2(\frac{xv}{w}+\frac{w}{xv})=\frac{y_1r_1}
{z_1}+\frac{z_1}{y_1r_1}+\frac{u_1w_1}{v_1}+\frac{v_1}{u_1w_1}.$

\end{itemize}}
}
\end{thm}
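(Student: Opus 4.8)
The plan is to compute the characteristic polynomials of the two families of matrices $A(G)_{\mathbf{x}}$ and $A(H)_{\mathbf{x}}$ and show they coincide under the two hypotheses. By Theorem \ref{ma}, the spectrum of each lift is the multi-set union, over $\mathbf{x}\in\beta$, of the spectra of these $6\times 6$ matrices. Since $G$ and $H$ are cospectral, the basis $\beta$ and the eigenvalues $\lambda_{\mathbf{x}}(P_{s(i,j)})$ range over the same index set, so it suffices to show that for each $\mathbf{x}$ the matrices $A(G)_{\mathbf{x}}$ and $A(H)_{\mathbf{x}}$ have the same characteristic polynomial, i.e. $\det(\lambda I - A(G)_{\mathbf{x}}) = \det(\lambda I - A(H)_{\mathbf{x}})$ as polynomials in $\lambda$.

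First I would expand both determinants. The key simplification is that these are Hermitian-like tridiagonal/near-tridiagonal matrices whose entries come in reciprocal pairs $(u,u^{-1})$, so every off-diagonal product that survives in the determinant expansion collapses to a product of norms $|u|^2$-type terms — in fact each pair contributes a real quantity of the form $t+t^{-1}$ where $t$ is a ratio of edge-eigenvalues. Concretely, $A(G)_{\mathbf{x}}$ is the adjacency-type matrix of a path-plus-one-extra-edge on $6$ vertices (vertices $1{-}2{-}3{-}4{-}5{-}6$ with an extra chord $2{-}4$ via $w$ and the triangle $2,3,4$), while $A(H)_{\mathbf{x}}$ has vertex $3$ as a cut vertex joined to a triangle $\{1,2,3\}$ and a path $3{-}4$, $3{-}5{-}6$. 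I would expand along the degree-one vertices (vertex $1$ for $G$, vertices $4$ for $H$, etc.) using cofactor expansion to reduce each $6\times 6$ determinant to a polynomial in $\lambda$ whose coefficients are symmetric functions of the reciprocal-pair norms.

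After expansion, I expect the characteristic polynomial to have the shape $\lambda^6 - c_4\lambda^4 + c_3\lambda^3 - c_2\lambda^2 + \cdots$, where the coefficients $c_i$ are explicit in the two lists of variables. The coefficient $c_4$ counts single edges and is automatically equal for both graphs since each has the same number of edges, and each edge contributes $u u^{-1}=1$. The decisive coefficients are $c_3$ (governed by triangles) and $c_2$ (governed by pairs of disjoint edges and longer closed walks). Here is where the two hypotheses enter: the first relation $\frac{w}{v}=\frac{y}{z}$ is designed to force equality of the mixed cross-terms arising from the chord $w$ interacting with the triangle, and the second relation — symmetric in the combinations $\frac{xv}{w}+\frac{w}{xv}$, $\frac{y_1 r_1}{z_1}+\frac{z_1}{y_1 r_1}$, $\frac{u_1 w_1}{v_1}+\frac{v_1}{u_1 w_1}$ — is precisely the equation $c_2(G)=c_2(H)$ after the first relation has been substituted to simplify the $G$-side. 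So my method is: write down $c_3$ and $c_2$ for each side, impose the first bullet to clean up the $G$-side, and then read off that equality of the quadratic coefficients is equivalent to the second bullet.

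The main obstacle I anticipate is purely bookkeeping: correctly collecting the contributions to $c_2$ and $c_3$, since the determinant of a $6\times 6$ matrix has many terms and the reciprocal structure must be tracked carefully so that the complex phases cancel into real $t+t^{-1}$ expressions. A secondary subtlety is verifying that the odd-degree coefficients (the $\lambda^3$ and $\lambda^1$ terms, coming from odd closed walks, i.e. triangles) match — the triangle in $G$ sits on $\{2,3,4\}$ with product $v x w^{-1}$ (up to inverses) and the triangle in $H$ sits on $\{1,2,3\}$ with product $u_1 w_1 v_1^{-1}$, so I would need the first hypothesis (or a consequence of it together with the data that $G,H$ are genuinely cospectral) to guarantee these triangle contributions agree; I expect this to follow because $\frac{xv}{w}+\frac{w}{xv}$ is exactly the real part governing the triangle term and appears on the left of the second bullet. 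Once all coefficients are matched, the characteristic polynomials are identical for every $\mathbf{x}$, and summing the multi-sets over $\beta$ via Theorem \ref{ma} yields that $G(s)$ and $H(s')$ are cospectral.
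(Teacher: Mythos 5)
Your overall strategy---invoking Theorem \ref{ma} to reduce the claim to equality of the characteristic polynomials of the $6\times 6$ matrices $A(G)_{\mathbf{x}}$ and $A(H)_{\mathbf{x}}$, and then matching coefficients---is exactly the paper's proof. However, your roadmap assigns the two hypotheses to the wrong coefficients, and this is a genuine gap rather than bookkeeping: followed literally, the plan does not close. The second bullet is \emph{not} the equation $c_2(G)=c_2(H)$; it is the equation coming from the odd coefficients (of $t^3$ and of $t$), i.e.\ from the triangles. You also work with only one triangle per graph, whereas each graph has two: $G$ has $\{2,3,4\}$ (term $\frac{xv}{w}+\frac{w}{xv}$) and $\{3,4,5\}$ (term $\frac{xz}{y}+\frac{y}{xz}$), while $H$ has $\{1,2,3\}$ and $\{3,5,6\}$. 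Equality of the $t^3$-coefficients is therefore the four-term identity $\frac{xz}{y}+\frac{y}{xz}+\frac{xv}{w}+\frac{w}{xv}=\frac{y_1r_1}{z_1}+\frac{z_1}{y_1r_1}+\frac{u_1w_1}{v_1}+\frac{v_1}{u_1w_1}$. The first bullet ($wz=vy$) forces $\frac{xz}{y}=\frac{xv}{w}$, collapsing the left side to $2\left(\frac{xv}{w}+\frac{w}{xv}\right)$; that is exactly where the factor $2$ in the second bullet comes from, and a one-triangle-per-graph picture cannot produce it.

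The role of the first bullet on its own is to equalize the $t^2$-coefficients, and the mechanism is a count your sketch never makes: $G$ has nine pairs of disjoint edges plus one $4$-cycle (on vertices $2,3,5,4$, contributing $-\bigl(\frac{wz}{vy}+\frac{vy}{wz}\bigr)$), whereas $H$ has only seven pairs of disjoint edges and no $4$-cycle. Equality of the $t^2$-coefficients thus reads $9-\bigl(\frac{wz}{vy}+\frac{vy}{wz}\bigr)=7$, i.e.\ $\frac{wz}{vy}+\frac{vy}{wz}=2$, which the first bullet guarantees (and conversely, since the entries are roots of unity, this equation forces $wz=vy$, which is how the paper arrives at the form of the first bullet). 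The remaining coefficients need no hypotheses at all: the $t^4$-coefficient matches because both graphs have seven edges (as you noted); the coefficient of $t$ yields the same equation as $t^3$, because in each graph every triangle has exactly one edge disjoint from it; and the constant terms are both $-1$ because each graph has a unique perfect matching, of weight $1$. With these corrections your plan becomes precisely the paper's argument, but as written the mismatch between hypotheses and coefficients, and the missing triangles and $4$-cycle/matching counts, would derail the verification.
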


\begin{proof} {We consider all posibilities for the matrices $A(G)_\mathbf{x},A(H)_\mathbf{x}$
corresponding to the graphs $G,H$. Comparing the coefficients of
$\chi(A(G)_\mathbf{x},t),\chi(A(H)_\mathbf{x},t)$, the equality
holds if and only if
$$2=\frac{wz}{vy}+\frac{vy}{wz},\hspace{2cm}(1)$$
$$\frac{xz}{y}+\frac{y}{xz}+\frac{xv}{w}+\frac{w}{xv}=\frac{y_1r_1}
{z_1}+\frac{z_1}{y_1r_1}+\frac{u_1w_1}{v_1}+\frac{v_1}{u_1w_1}.\hspace{2cm}(2)$$
The first equation follows by comparing the coefficients of $t^2$
and the second one follows by comparing the coefficients of
$t,t^3$. Consider the first equality above, note the variables
are the $n$'th roots of unity hence the equality $(1)$ holds if
and only if $wz=vy$, hence the first assertion of the statement
follows. The second assertion follows by the Equation (2) and the
first equality. Hence the graphs according to the mentioned
signatures are cospectral.}
\end{proof}
\begin{cor} {The following constraints on the variables $r,u,v,\ldots,z,r_1,u_1,v_1,\ldots,z_1$
will give cospectral lifts of the graphs $G$ and $H$.
$$z=\frac{vy}{w},\textrm{ }u_1=y_1=x,\textrm{ }w_1=r_1=v,\textrm{ }z_1=w$$ }
\end{cor}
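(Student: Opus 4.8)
The plan is to verify the two displayed constraints in the corollary by substitution into the two conditions of Theorem \ref{as}. Since the corollary claims these explicit assignments yield cospectral lifts, I would simply check that the given values force both bullet points of the theorem to hold, and then invoke the theorem directly. This is a routine verification rather than a deep argument, so the work reduces to algebra in the complex variables subject to the constraint that they are roots of unity (hence have unit modulus and inverse equal to conjugate).

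First I would address the first condition of Theorem \ref{as}, namely $\frac{w}{v}=\frac{y}{z}$. Under the assignment $z=\frac{vy}{w}$ I would substitute to get $\frac{y}{z}=\frac{yw}{vy}=\frac{w}{v}$, so the first bullet holds immediately. This is the easy half and confirms that equation $(1)$ of the proof, equivalently $wz=vy$, is satisfied by the prescribed value of $z$.

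Next I would turn to the second condition, which after using the first relation reads $2\left(\frac{xv}{w}+\frac{w}{xv}\right)=\frac{y_1r_1}{z_1}+\frac{z_1}{y_1r_1}+\frac{u_1w_1}{v_1}+\frac{v_1}{u_1w_1}$. Here I would substitute the assignments $u_1=y_1=x$, $w_1=r_1=v$, and $z_1=w$ into the right-hand side. The first pair of terms becomes $\frac{y_1r_1}{z_1}+\frac{z_1}{y_1r_1}=\frac{xv}{w}+\frac{w}{xv}$, and the second pair becomes $\frac{u_1w_1}{v_1}+\frac{v_1}{u_1w_1}=\frac{xv}{v_1}+\frac{v_1}{xv}$. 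The main obstacle I anticipate is the appearance of the stray variable $v_1$: to make the second pair equal to $\frac{xv}{w}+\frac{w}{xv}$ as well, one needs $v_1=w$ (or $v_1=\frac{x v \cdot xv}{w}$, the other solution of the quadratic), which is an additional implicit constraint that the corollary as stated does not list explicitly. I would therefore either supply the missing relation $v_1=w$ among the constraints, or note that with it the right-hand side collapses to $2\left(\frac{xv}{w}+\frac{w}{xv}\right)$, matching the left-hand side and so satisfying the second bullet.

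Having verified both bullets, I would conclude by appealing to Theorem \ref{as}: since the prescribed values of $z,u_1,w_1,y_1,r_1,z_1$ (together with the needed $v_1=w$) make both stated conditions hold, the graphs $G(s)$ and $H(s')$ built from these signatures are cospectral, which is exactly the claim of the corollary. The whole proof is thus a direct substitution, and the only subtle point worth flagging is the hidden dependence on $v_1$ that must be pinned down for the second identity to balance.
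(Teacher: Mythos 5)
Your proof is correct and is exactly the route the paper intends: the paper states this corollary with no argument of its own, leaving it as the routine substitution of the listed assignments into the two conditions of Theorem \ref{as}, which is precisely what you carried out. The gap you flagged is genuine and is in the paper's statement, not in your argument: with $v_1$ unconstrained the second condition fails (e.g.\ $x=v=w=1$, $v_1=-1$ gives $4\neq 0$), so the corollary needs the extra relation $v_1=w$ (or $v_1=(xv)^2/w$); indeed the paper's own Example uses $s'(1,3)=s'(3,6)=\mathrm{id}$ and $s(2,4)=\mathrm{id}$, i.e.\ $v_1=z_1=w=1$, which strongly suggests the intended statement was $v_1=z_1=w$.
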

\begin{ex} {\rm In the graphs $G$ and $H$ the following signatures have the condition stated in
Theorem \ref{as}. Hence the graphs $G(s)$ and $H(s')$ are
cospectral. The group members are denoted in the cyclic
representation.
$$s(1,2)=(1,2,3),s(2,3)=(1,3,2),s(2,4)=\textrm{id},$$$$ s(3,4)=(1,3,2),s(3,5)=(1,3,2),s(4,5)=(1,2,3),s(5,6)=(1,2)$$
$$s'(1,2)=(1,3,2),s'(1,3)=\textrm{id},s'(2,3)=(1,3,2),$$$$s'(3,4)=\textrm{id},s'(3,5)=(1,3,2),s'(3,6)=\textrm{id},s'(5,6)=(1,3,2)$$
The adjacency matrices of the graphs $G(s)$ and $H(s')$ are of
the following forms.
$$A(G(s)) = \begin{pmatrix}0, 0, 0, 0, 1, 0, 0, 0, 0, 0, 0,
0, 0, 0, 0, 0, 0, 0\\ 0, 0, 0, 0, 0, 1, 0, 0, 0, 0, 0, 0, 0, 0,
0, 0, 0, 0\\ 0, 0, 0, 1, 0, 0, 0, 0, 0, 0, 0, 0, 0, 0, 0, 0, 0,
0\\ 0, 0, 1, 0, 0, 0, 0, 0, 1, 1, 0, 0, 0, 0, 0, 0,
0, 0\\ 1, 0, 0, 0, 0, 0, 1, 0, 0, 0, 1, 0, 0, 0, 0, 0, 0, 0\\
0, 1, 0, 0, 0, 0, 0, 1, 0, 0, 0, 1, 0, 0, 0, 0, 0, 0\\ 0, 0, 0,
0, 1, 0, 0, 0, 0, 0, 0, 1, 0, 0, 1, 0, 0, 0\\ 0, 0, 0, 0, 0, 1,
0, 0, 0, 1, 0, 0, 1, 0, 0, 0, 0, 0\\ 0, 0, 0, 1, 0, 0, 0, 0, 0,
0, 1, 0, 0, 1, 0, 0, 0, 0\\ 0, 0, 0, 1, 0, 0, 0, 1, 0, 0, 0, 0,
0, 1, 0, 0, 0, 0\\ 0, 0, 0, 0, 1, 0, 0, 0, 1, 0, 0, 0, 0, 0, 1,
0, 0, 0\\ 0, 0, 0, 0, 0, 1, 1, 0, 0, 0, 0, 0, 1, 0, 0, 0, 0,
0\\
0, 0, 0, 0, 0, 0, 0, 1, 0, 0, 0, 1, 0, 0, 0, 0, 1, 0\\ 0, 0, 0,
0, 0, 0, 0, 0, 1, 1, 0, 0, 0, 0, 0, 1, 0, 0\\0, 0, 0, 0, 0, 0, 1,
0, 0, 0, 1, 0, 0, 0, 0, 0, 0, 1\\ 0, 0, 0, 0, 0, 0, 0, 0, 0, 0,
0, 0, 0, 1, 0, 0, 0, 0\\ 0, 0, 0, 0, 0, 0, 0, 0, 0, 0, 0, 0, 1,
0, 0, 0, 0, 0\\ 0, 0, 0, 0, 0, 0, 0, 0, 0, 0, 0, 0, 0, 0, 1, 0,
0, 0\end{pmatrix},A(H(s'))=\begin{pmatrix}0, 0, 0, 0, 0, 1, 0, 0, 1, 0, 0, 0, 0, 0, 0, 0, 0, 0\\ 0, 0, 0, 1, 0, 0, 1, 0, 0, 0, 0, 0, 0, 0, 0, 0, 0, 0\\
 0, 0, 0, 0, 1, 0, 0, 1, 0, 0, 0, 0, 0, 0, 0, 0, 0, 0\\  0, 1, 0, 0, 0, 0, 0, 1, 0, 0, 0, 0, 0, 0, 0, 0, 0, 0\\
   0, 0, 1, 0, 0, 0, 0, 0, 1, 0, 0, 0, 0, 0, 0, 0, 0, 0\\    1, 0, 0, 0, 0, 0, 1, 0, 0, 0, 0, 0, 0, 0, 0, 0, 0, 0\\
     0, 1, 0, 0, 0, 1, 0, 0, 0, 0, 0, 1, 0, 0, 0, 0, 1, 1\\     0, 0, 1, 1, 0, 0, 0, 0, 0, 0, 1, 0, 0, 1, 1, 0, 0, 0\\
       1, 0, 0, 0, 1, 0, 0, 0, 0, 1, 0, 0, 1, 0, 0, 1, 0, 0\\        0, 0, 0, 0, 0, 0, 0, 0, 1, 0, 0, 0, 0, 0, 0, 0, 0, 0\\
         0, 0, 0, 0, 0, 0, 0, 1, 0, 0, 0, 0, 0, 0, 0, 0, 0, 0\\          0, 0, 0, 0, 0, 0, 1, 0, 0, 0, 0, 0, 0, 0, 0, 0, 0, 0\\
           0, 0, 0, 0, 0, 0, 0, 0, 1, 0, 0, 0, 0, 0, 0, 0, 0, 1\\            0, 0, 0, 0, 0, 0, 0, 1, 0, 0, 0, 0, 0, 0, 0, 1, 0, 0\\
             0, 0, 0, 0, 0, 0, 0, 1, 0, 0, 0, 0, 0, 0, 0, 0, 1, 0\\              0, 0, 0, 0, 0, 0, 0, 0, 1, 0, 0, 0, 0, 1, 0, 0, 0, 0\\
               0, 0, 0, 0, 0, 0, 1, 0, 0, 0, 0, 0, 0, 0, 1, 0, 0, 0\\                0, 0, 0, 0, 0, 0, 1, 0, 0, 0, 0, 0, 1, 0, 0, 0, 0,
               0\end{pmatrix}.$$}

\end{ex}

\bibliographystyle{siam}
  \end{document}